\numberwithin{equation}{section}
\newcommand{\set}[1]{\left\{#1\right\}}
\newtheorem{Theorem}{Theorem}[section]
\newtheorem{Proposition}[Theorem]{Proposition}
\newtheorem{cor}[Theorem]{Corollary}
\newtheorem{lemma}[Theorem]{Lemma}
\theoremstyle{remark}
\newtheorem{Remark}[Theorem]{Remark}
\begin{document}

\title{Approximate biflatness and Johnson pseudo-contractibility of some Banach algebras}

  \author[A. Sahami]{A. Sahami}
  \email{a.sahami@ilam.ac.ir}
  
  \address{Department of Mathematics,
  	Faculty of Basic Sciences, Ilam University P.O. Box 69315-516 Ilam,
  	Iran.}

\author[M. R.  Omidi]{M. R.  Omidi}
\email{m.omidi@kut.ac.ir}

\address{ Department of Basic Sciences, Kermanshah University of Technology, Kermanshah, Iran}

\author[E.  Ghaderi]{E. Ghaderi}
\email{eg.ghaderi@uok.ac.ir}

\address{ Department of Mathematics,  University of Kurdistan,  Pasdaran boulevard, Sanandaj 66177--15175,   P. O. Box 416, Iran.}

\author[H. Zangeneh]{H. Zangeneh}
\email{hamzeh.za@gmail.com}
\address{Department of Mathematics,
	Faculty of Basic Sciences, Ilam University P.O. Box 69315-516 Ilam,
	Iran.}
\keywords{Approximate biflatness, Johnson pseudo-contractibility, Lipschitz algebra, triangular Banach algebra.}

\subjclass[2010]{ Primary 46M10, 46H20, Secondary 46H05.}

\maketitle

\begin{abstract}
	In this paper, we study  the structure of  Lipschitz algebras under the notions of approximate biflatness and Johnson pseudo-contractibility.
We show that for  a compact  metric space $X,$  the Lipschitz algebras  $Lip_{\alpha}(X)$ and $\ell ip_{\alpha}(X)$  are approximately biflat if and only if $X$ is finite, provided that $0<\alpha<1$. We give an enough and sufficient condition that a vector-valued Lipschitz algebras is Johnson pseudo-contractible for each $\alpha>0.$
We also show that some triangular Banach algebras are not approximately biflat.
\end{abstract}
\section{Introduction and preliminaries}
A Banach algebra $A$ is called amenable if there exists a bounded net $(m_{\alpha})$ in $A\otimes_{p}A$ such that $a\cdot m_{\alpha}-m_{\alpha}\cdot a \rightarrow 0$ and $\pi_{A}(m_{\alpha})a\rightarrow a $ for every $a\in A,$ where $\pi_{A}:A\otimes_{p}A\rightarrow A$ is  the product morphism given by  $\pi_{A}(a\otimes b)=ab.$ Johnson showed that for a locally compact group $G$, $L^{1}(G)$ is amenable if and only if $G$ is amenable. For more information about the history of amenability, the reader refers to \cite{run}.

An important notion of homological theory related to amenability is biflatness. In fact a Banach algebra $A$ is called biflat, if there exists a bounded $A$-bimodule morphism $\rho:(A\otimes_{p}A)^*\rightarrow A^*$ such that $\rho\circ\pi^*_{A}=id_{A^*}$.  It is well-known that a Banach algebra $A$ is amenable if and only if $A$ is biflat and $A$ has a bounded approximate identity.

Motivated by these considerations,    Samei {\it et al.} introduced in \cite{sam}  the approximate version of biflatness. Indeed a Banach algebra $A$ is approximately biflat if there exists a net of $A$-bimodule morphism $(\rho_{\alpha})$ from $(A\otimes_{p}A)^*$ into $A^*$ such that $\rho_{\alpha}\circ\pi^*_{A}\xrightarrow {W^*OT}id_{A^*}$, where $W^*OT$ stands for the  weak star operator topology. They studied approximate biflatness of the Segal algebras and the Fourier algebras. The Lipschitz algebras are concrete Banach algebras, see \cite{Lipschitz}. These algebras are rely upon the metric spaces.  In this paper, we characterize  approximate biflatness of  Lipschitz algebras and we show that for a compact  metric space $X,$  the Lipschitz algebras  $Lip_{\alpha}(X)$ and $\ell ip_{\alpha}(X)$  are approximately biflat if and only if $X$ is finite, provided that $0<\alpha<1$. We also study the Johnson pseudo-contractibility of vector-valued Lipschitz algebras and we investigate  the approximate biflatness of some triangular Banach algebras.

We present some standard notations and definitions that we shall need
in this paper. Let $A$ be a Banach algebra. Throughout this work,
the character space of $A$ is denoted by $\Delta(A)$, that is, the set all
non-zero multiplicative linear functionals on $A$.
 For each
$\phi\in\Delta(A)$ there exists a unique extension $\tilde{\phi}$ to
$A^{**}$ which is defined by $\tilde{\phi}(F)=F(\phi)$. It is easy to
see that $\tilde{\phi}\in\Delta(A^{**})$.
 The projective
tensor product
$A\otimes_{p}A$ is a Banach $A$-bimodule via  the following actions
$$a\cdot(b\otimes c)=ab\otimes c,~~~(b\otimes c)\cdot a=b\otimes
ca\hspace{.5cm}(a, b, c\in A).$$ 
Let $X$ and $Y$ be Banach $A$-bimodules. The linear map $T:X\rightarrow Y$ is called $A$-bimodule morphism, if 
$$T(a\cdot x)=a\cdot T(x),\quad T(x\cdot a)=T(x)\cdot a,\qquad (a\in A,x\in X).$$
\section{Johnson pseudo-contractibility and approximate biflatness}
We recall that the Banach algebra $A$ is Johnson pseudo-contractible, 
if  there  exists a not necessarily bounded net $(m_{\alpha})$ in $(A\otimes_{p}A)^{**}$ such that $a\cdot m_{\alpha}=m_{\alpha}\cdot a $ and $\pi^{**}_{A}(m_{\alpha})a\rightarrow a $ for each $a\in A,$ see \cite{sah new amen} and \cite{sah new amen1}.
\begin{Theorem}
Let $A$ be a Johnson pseudo-contractible Banach algebra. Then $A$ is approximately biflat.
\end{Theorem}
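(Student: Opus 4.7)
The plan is to turn the net $(m_\alpha)\subset(A\otimes_p A)^{**}$ directly into the required net of bimodule morphisms. For each $\alpha$, I would define $\rho_\alpha:(A\otimes_p A)^*\to A^*$ by
$$\rho_\alpha(f)(a):=\langle m_\alpha,\,a\cdot f\rangle\qquad (a\in A,\ f\in(A\otimes_p A)^*),$$
where $a\cdot f$ denotes the canonical left action of $A$ on $(A\otimes_p A)^*$, namely $(a\cdot f)(x)=f(x\cdot a)$. The estimate $|\rho_\alpha(f)(a)|\le\|m_\alpha\|\,\|f\|\,\|a\|$ shows that $\rho_\alpha$ is bounded with $\|\rho_\alpha\|\le\|m_\alpha\|$; the net of norms may be unbounded, which is consistent with approximate biflatness.

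The first step is to verify that each $\rho_\alpha$ is an $A$-bimodule morphism. The left-module identity $\rho_\alpha(b\cdot f)=b\cdot\rho_\alpha(f)$ is a formal consequence of associativity of the left action. The right-module identity $\rho_\alpha(f\cdot b)=\rho_\alpha(f)\cdot b$ is exactly where the commutation $b\cdot m_\alpha=m_\alpha\cdot b$ is used: unfolding both sides reduces it to $\langle m_\alpha,\,b\cdot(a\cdot f)\rangle=\langle m_\alpha,\,(a\cdot f)\cdot b\rangle$, which is the commutation applied to the functional $a\cdot f\in (A\otimes_p A)^*$.

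Next I would verify $\rho_\alpha\circ\pi_A^*\xrightarrow{W^*OT}\mathrm{id}_{A^*}$. Since $\pi_A$ is an $A$-bimodule morphism, so is its transpose $\pi_A^*$; a direct check on elementary tensors gives $a\cdot\pi_A^*(g)=\pi_A^*(a\cdot g)$ for all $a\in A$, $g\in A^*$. Consequently
$$\rho_\alpha(\pi_A^*(g))(a)=\langle m_\alpha,\,\pi_A^*(a\cdot g)\rangle=\langle\pi_A^{**}(m_\alpha),\,a\cdot g\rangle=\langle\pi_A^{**}(m_\alpha)\cdot a,\,g\rangle,$$
using the definition of the canonical action of $A$ on $A^{**}$ in the last step. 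The pseudo-contractibility condition $\pi_A^{**}(m_\alpha)a\to a$, interpreted in the weak$^*$-topology of $A^{**}$, yields $\langle\pi_A^{**}(m_\alpha)\cdot a,\,g\rangle\to g(a)$ for every fixed $a\in A$ and $g\in A^*$. This is precisely weak$^*$-convergence of $\rho_\alpha(\pi_A^*(g))$ to $g$ in $A^*$ for each $g$, i.e., $W^*OT$-convergence of $\rho_\alpha\circ\pi_A^*$ to $\mathrm{id}_{A^*}$.

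The main obstacle is bookkeeping: the various module actions on $A^*$, $(A\otimes_p A)^*$, and their biduals have to be tracked carefully so that (i) the commutation $a\cdot m_\alpha=m_\alpha\cdot a$ translates exactly into the right-module property of $\rho_\alpha$, and (ii) the approximate-identity-type condition on $\pi_A^{**}(m_\alpha)$ converts to the correct operator-topology convergence. No serious analytic difficulty arises; once the actions and Arens-type identities are lined up, the argument is essentially formal.
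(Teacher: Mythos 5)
Your proof is correct and is essentially the paper's argument: your map $\rho_\alpha(f)(a)=\langle m_\alpha,\,a\cdot f\rangle=\langle a\cdot m_\alpha,\,f\rangle$ is exactly the restriction to $(A\otimes_p A)^*$ of the adjoint of the paper's $\theta_\alpha(a)=a\cdot m_\alpha$, and the bimodule and $W^*OT$ verifications proceed along the same lines.
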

\begin{proof}
Suppose that $A$ is  a Johnson pseudo-contractible Banach algebra. Then there  exists a net $(m_{\alpha})$ in $(A\otimes_{p}A)^{**}$ such that $a\cdot m_{\alpha}=m_{\alpha}\cdot a $ and $\pi^{**}_{A}(m_{\alpha})a\rightarrow a $ for each $a\in A.$
Define $\theta_{\alpha}(a)=a\cdot m_{\alpha}$. Clearly  $(\theta_{\alpha})_{\alpha }$ is a net of $A$-bimodule morphisms from $A$ into $(A\otimes_{p}A)^{**}$ such that $\pi_{A}^{**}\circ\theta_{\alpha}(a)\rightarrow a,$ for each $a\in A.$ Put $\rho_{\alpha}=\theta^{*}_{\alpha}|_{(A\otimes_{p}A)^*}:(A\otimes_{p}A)^*\rightarrow A^*$. It is easy to see that $(\rho_{\alpha})_{\alpha}$ is a net of $A$-bimodule morphisms. We claim that $$\rho_{\alpha}\circ \pi^{*}_{A}\xrightarrow{W^*OT} id_{A^*}.$$ To see this, let $a\in A$ and $f\in A^*. $
\begin{equation*}
\begin{split}
<\rho_{\alpha}\circ \pi^{*}_{A}(f),a>-<a,f>&=<\theta^{*}_{\alpha}|_{(A\otimes_{p}A)^*}\circ \pi^*_{A}(f),a>-<a,f>\\
&=<\theta^{***}_{\alpha}\circ \pi^*_{A}(f),a>-<a,f>\\
&=< \pi^*_{A}(f),\theta^{**}_{\alpha}(a)>-<a,f>\\
&=< \pi^*_{A}(f),\theta_{\alpha}(a)>-<a,f>\\
&=< \theta_{\alpha}(a),\pi^*_{A}(f)>-<a,f>\\
&=< \pi^{**}_{A}\circ \theta_{\alpha}(a),f>-<a,f>\rightarrow 0.
\end{split}
\end{equation*}
It follows that $A$ is approximately biflat.
\end{proof}
\begin{Remark}
The converse of above theorem is not always true. To see this, suppose that    $S$ is  the  left zero semigroup with $|S|\geq
2$, that is, a semigroup with product $st=s$ for all $s,t\in S.$
Then the related  semigroup algebra $\ell^{1}(S)$ has the following product
$$fg=\phi_{S}(f)g,\quad f,g\in \ell^{1}(S),$$
where $\phi_{S}$ is denoted for the augmentation character on $\ell^{1}(S)$.
Define $\rho:\ell^{1}(S)\rightarrow (\ell^{1}(S)\otimes_{p}\ell^{1}(S))^{**}$ by $\rho(f)=f_{0}\otimes f$. Clearly $\rho$ is a bounded $\ell^{1}(S)$-bimodule morphism
which $\pi^{**}_{\ell^{1}(S)}\circ \rho(f)=f$, for each $f\in \ell^{1}(S).$ Applying \cite[Lemma 4.3.22]{run}, $\ell^{1}(S)$ becomes biflat. So $\ell^{1}(S)$ is approximately biflat. We claim that $\ell^{1}(S)$ is not Johnson pseudo-contractible. We assume conversely that $\ell^{1}(S)$ is 
  Johnson pseudo-contractible.
It is easy to see that $\ell^{1}(S)$ has an approximate identity, say $(e_{\alpha})$. Consider 
$$\phi_{S}(e_{\alpha})\rightarrow 1, \quad
e_{\alpha}f-fe_{\alpha}=
\phi_{S}(e_{\alpha})f-\phi_{S}(f)e_{\alpha}\rightarrow 0\quad (f\in
\ell^{1}(S)).$$
It follows that $f-\phi_{S}(f)e_{\alpha}\rightarrow
0$ for each $f\in \ell^{1}(S)$. Since  there exist at least  two different elements
$s_{1}$ and $s_{2}$ in $S$, replace two distinct elements  $\delta_{s_{1}}$ and  $\delta_{s_{2}}$ of $\ell^{1}(S)$
with $f$ in
$f-\phi_{S}(f)e_{\alpha}\rightarrow 0$. It follows that
$\delta_{s_{1}}=\delta_{s_{2}}$, so $s_{1}=s_{2}$ which is
a contradiction.
\end{Remark}
It is still open, whether the approximately biflatness of $A$ implies the Johnson pseudo-contractibility of $A$.
\begin{lemma}\label{net for app bi}
	Let $A$ be an approximately biflat Banach algebra with a central approximate identity. Then there is a net $(m_{\gamma})$ in $(A\otimes_{p}A)^{**}$ such that $$a\cdot m_{\gamma}=m_{\gamma}\cdot a,\quad \pi_{A}^{**}(m_{\gamma})a\xrightarrow{w^*} a,\qquad (a\in A).$$
\end{lemma}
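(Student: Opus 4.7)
The approach is to feed a central approximate identity $(e_\lambda)_{\lambda}$ of $A$ through the second adjoint of a net $(\rho_\alpha)_{\alpha}$ witnessing approximate biflatness, and then extract a diagonal subnet. Concretely, for each pair $(\alpha,\lambda)$ I would put
$$m_{\alpha,\lambda}=\rho_\alpha^{*}(\hat{e}_\lambda)\in (A\otimes_{p}A)^{**},$$
where $\hat{e}_\lambda$ denotes the canonical image of $e_\lambda$ in $A^{**}$ and $\rho_\alpha^{*}\colon A^{**}\to (A\otimes_{p}A)^{**}$ is the second adjoint of $\rho_\alpha$.

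The centrality identity $a\cdot m_{\alpha,\lambda}=m_{\alpha,\lambda}\cdot a$ is the easy half. Since $\rho_\alpha$ is an $A$-bimodule morphism, so is $\rho_\alpha^{*}$ for the canonical dual bimodule actions. Because each $e_\lambda$ is central in $A$, we have $a\cdot \hat{e}_\lambda =\widehat{ae_\lambda}=\widehat{e_\lambda a}=\hat{e}_\lambda\cdot a$ inside $A^{**}$, and applying $\rho_\alpha^{*}$ gives the claim for every $a\in A$.

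For the approximation property, I would use the identity $\pi_{A}^{**}(m_{\alpha,\lambda})=(\rho_\alpha\circ\pi_{A}^{*})^{*}(\hat{e}_\lambda)$. The assumption $\rho_\alpha\circ\pi_{A}^{*}\xrightarrow{W^{*}OT}id_{A^{*}}$ translates, by passing to adjoints, to $(\rho_\alpha\circ\pi_{A}^{*})^{*}(\hat{e}_\lambda)\xrightarrow{w^{*}}\hat{e}_\lambda$ in $A^{**}$ for each fixed $\lambda$. Right multiplication by $a\in A$ is $w^{*}$-continuous on $A^{**}$, so $\pi_{A}^{**}(m_{\alpha,\lambda})\cdot a \xrightarrow{w^{*}} \hat{e}_\lambda\cdot a=\widehat{e_\lambda a}$ as $\alpha$ varies; then $e_\lambda a\to a$ in norm yields $\widehat{e_\lambda a}\to \hat{a}$. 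Combining the two limits via a standard iterated-limit / diagonalization procedure — indexing the combined net by triples $\gamma=(\lambda,F,\varepsilon)$, where $F\subset A\times A^{*}$ is finite and $\varepsilon>0$, and selecting $\alpha(\gamma)$ so that $|\langle \pi_{A}^{**}(m_{\alpha(\gamma),\lambda})a-\widehat{e_\lambda a},f\rangle|<\varepsilon$ for $(a,f)\in F$ — produces a single net $m_\gamma:=m_{\alpha(\gamma),\lambda}$ with $\pi_{A}^{**}(m_\gamma)a\xrightarrow{w^{*}}a$ for every $a\in A$, while the centrality property is automatically inherited.

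The main care point is precisely this iterated-limit step: the convergence in $\alpha$ is pointwise in $\lambda$, $a$, and $f$, whereas the convergence in $\lambda$ is in norm, and one must set up the combined directed index with enough slack to realise both limits simultaneously. Once that indexing is fixed, the verification of the two listed conditions is routine.
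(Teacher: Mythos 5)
Your proposal is correct and follows essentially the same route as the paper: define $m_{\alpha,\lambda}=\rho_\alpha^{*}(\hat e_\lambda)$, get commutation from centrality of $e_\lambda$ plus the bimodule property of $\rho_\alpha^{*}$, and obtain $\pi_A^{**}(m_{\alpha,\lambda})a\xrightarrow{w^*}\widehat{e_\lambda a}\to \hat a$ as an iterated limit. The only difference is bookkeeping: the paper combines the two limits by invoking Kelley's iterated limit theorem with index set $J\times I^{J}$, whereas you run an explicit $(\lambda,F,\varepsilon)$ diagonalization, which is an equivalent device.
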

\begin{proof}
Suppose that $A$ is an approximately biflat Banach algebra with a central approximate identity, say $(e_{\beta})_{\beta\in J}$.
Then there exists a net of $A$-bimodule morphism $(\rho_{\alpha})_{\alpha\in I}$ from $(A\otimes_{p}A)^*$
into $A^*$ such that $\rho_{\alpha}\circ \pi^*_{A}\xrightarrow{W^*OT}id_{A^*}.$ Set $m^{\beta}_{\alpha}=\rho_{\alpha}^{*}(e_{\beta})$. Since $(\rho_{\alpha}^{*})$ is a net of $A$-bimodule morphism, we have 
\begin{equation*}
\begin{split}
a\cdot m^{\beta}_{\alpha}=a\cdot \rho_{\alpha}^{*}(e_{\beta})=\rho_{\alpha}^{*}(ae_{\beta})
=\rho_{\alpha}^{*}(e_{\beta}a)
=\rho_{\alpha}^{*}(e_{\beta})\cdot a
=m^{\beta}_{\alpha}\cdot a,
\end{split}
\end{equation*}
for each $\alpha\in I$, $\beta \in J$ and $a\in A.$
Also for each $a\in A$ and $\phi\in A^*$, we have 
\begin{equation}\label{eq}
\begin{split}
\lim_{\beta}\lim_{\alpha}<\phi, \pi_{A}^{**}(m^{\beta}_{\alpha})\cdot a>&=\lim_{\beta}\lim_{\alpha}<\phi\cdot a, \pi_{A}^{**}(m^{\beta}_{\alpha})>\\
&=\lim_{\beta}\lim_{\alpha}<\phi\cdot a, \pi_{A}^{**}(\rho_{\alpha}^{*}(e_{\beta}))>\\
&=\lim_{\beta}\lim_{\alpha}<\rho_{\alpha}\circ \pi^*_{A}(\phi\cdot a), e_{\beta})>\\
&=\lim_{\beta} <\phi\cdot a, e_{\beta})>\\
&=\lim_{\beta} <\phi,a e_{\beta})>=<a,\phi>.
\end{split}
\end{equation}
Set $E=J\times I^{J}$, where $I^{J}$ is the set of
all functions from $J$ into $I$. Consider the product ordering on $E$ as follow  $$(\beta,\alpha)\leq_{E}
(\beta^{'},\alpha^{'})\Leftrightarrow \beta\leq_{J} \beta^{'},
\alpha\leq_{I^{J}}\alpha^{'}\qquad (\beta,\beta^{'}\in J,\quad
\alpha,\alpha^{'}\in I^{J}),$$ here  $
\alpha \leq_{I^{J}}\alpha^{'}$ means that $\alpha(d)\leq_{I}
\alpha^{'}(d)$ for each $d\in J$. Suppose that
$\gamma=(\beta,\alpha_{\beta})\in E$ and $m_{\gamma}=\rho^*_{\alpha_{\beta}}(e_{\beta})\in (A\otimes_{p}A)^{**}$.
Now using
iterated limit theorem \cite[page 69]{kel} and the equation (\ref{eq}), we have 
$$a\cdot m_{\gamma}=m_{\gamma}\cdot a,\quad \pi_{A}^{**}(m_{\gamma})a\xrightarrow{w^*} a,\qquad (a\in A).$$
\end{proof}

Let $A$ be a Banach algebra and $\phi\in\Delta(A)$.
An element $m\in A^{**}$
that satisfies $am=\phi(a)m$ and $\tilde{\phi}(m)=1,$ is called $\phi$-mean.  Suppose that
$m\in A^{**}$ is a  $\phi$-mean for $A$.  Since
$||\phi||=1,$ we have $||m||\geq 1.$ So for $C\geq 1$,
$A$ is called
$C$-$\phi$-amenable if  $A$ has a $\phi$-mean $m$ which $||m||\leq
C.$ Also $A$ is called
$C$-character amenable if  $A$ has a bounded right approximate identity and has  $\phi$-mean $m$ which $||m||\leq
C$, for eavery $\phi\in\Delta(A),$  see \cite{kan} and \cite{Hu}.
\begin{Proposition}\label{c character amen}
	Let $A$ be a Johnson pseudo-contractible Banach algebra and $\Delta(A)\neq \emptyset$. If $A$ has a right identity, then $A$ is $C$-character amenable.
\end{Proposition}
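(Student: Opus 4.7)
The plan is to produce, for each $\phi \in \Delta(A)$, a $\phi$-mean in $A^{**}$, and to extract the bounded right approximate identity directly from the right identity itself. The right identity $e$ is trivially a bounded right approximate identity, and the multiplicativity of $\phi$ together with $ae = a$ forces $\phi(e) = 1$ for every $\phi \in \Delta(A)$. Now fix $\phi$ and let $(m_{\alpha})$ in $(A \otimes_{p} A)^{**}$ be the net supplied by Johnson pseudo-contractibility. I would introduce the bounded linear map $T \colon A \otimes_{p} A \to A$ defined on elementary tensors by $T(a \otimes b) = \phi(b)\, a$, that is, $T = \mathrm{id} \otimes \phi$. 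A direct check shows $T(c \cdot x) = c \cdot T(x)$ and $T(x \cdot c) = \phi(c)\, T(x)$ for $c \in A$ and $x \in A \otimes_{p} A$; these identities pass to the bidual by weak-$*$ continuity, yielding $T^{**}(c \cdot X) = c \cdot T^{**}(X)$ and $T^{**}(X \cdot c) = \phi(c)\, T^{**}(X)$ for $X \in (A \otimes_{p} A)^{**}$.

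Setting $M_{\alpha} := T^{**}(m_{\alpha}) \in A^{**}$, the centrality $a \cdot m_{\alpha} = m_{\alpha} \cdot a$ gives $a M_{\alpha} = T^{**}(a \cdot m_{\alpha}) = T^{**}(m_{\alpha} \cdot a) = \phi(a) M_{\alpha}$. For the normalization, note $T^{*}(\phi) = \phi \otimes \phi = \pi_{A}^{*}(\phi)$, since both sides evaluate to $\phi(a)\phi(b)$ on $a \otimes b$. Hence $\tilde\phi(M_{\alpha}) = m_{\alpha}(\phi \otimes \phi) = \tilde\phi(\pi_{A}^{**}(m_{\alpha}))$. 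Applying $\tilde\phi$ to $\pi_{A}^{**}(m_{\alpha}) \cdot e \to e$ and using $\phi(e) = 1$, I obtain $\tilde\phi(M_{\alpha}) \to 1$. For any sufficiently late $\alpha_{0}$ with $\tilde\phi(M_{\alpha_{0}}) \neq 0$, the element $m_{\phi} := M_{\alpha_{0}} / \tilde\phi(M_{\alpha_{0}})$ is then a $\phi$-mean, so $A$ is $\phi$-amenable for every $\phi \in \Delta(A)$.

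The main obstacle will be the uniform bound. The naive estimate $\|M_{\alpha}\| \leq \|T\| \cdot \|m_{\alpha}\| \leq \|m_{\alpha}\|$ yields only control in terms of the net $(m_{\alpha})$, which the definition of Johnson pseudo-contractibility does not force to be bounded. I expect the right identity $e$ to enter at this point, either by allowing $(m_{\alpha})$ to be replaced by a norm-bounded net enjoying the same centrality and approximate-identity properties, or by enabling the extraction of a $\phi$-mean of controlled norm via a Hahn-Banach / weak-$*$ cluster-point argument inside the closed $\phi$-eigenspace $\{X \in A^{**} : aX = \phi(a)X \ \text{for every}\ a \in A\}$, on which $\tilde\phi$ has norm at most $1$. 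Once a uniform constant $C$ independent of $\phi$ is secured, combining it with the bounded right approximate identity $e$ will give that $A$ is $C$-character amenable.
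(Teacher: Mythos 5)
Your construction of the individual $\phi$-means is correct and is essentially the same device the paper uses elsewhere (the map $T(a\otimes b)=\phi(b)a$ applied to the net coming from Johnson pseudo-contractibility, as in Lemma~\ref{app left}); note that the paper's own ``proof'' of this proposition is only the pointer to \cite[Lemma 3.5]{sah new amen1}, so there is no in-paper argument to compare against line by line. Everything up to and including the normalisation $m_{\phi}=M_{\alpha_{0}}/\tilde{\phi}(M_{\alpha_{0}})$ checks out: it yields a $\phi$-mean for every $\phi\in\Delta(A)$, and the right identity $e$ is indeed a bounded right approximate identity.

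The genuine gap is exactly the one you flag and then leave open: $C$-character amenability requires a single constant $C$ bounding some $\phi$-mean for \emph{every} $\phi$ simultaneously, whereas your index $\alpha_{0}$ is chosen depending on $\phi$, so you only get the $\phi$-dependent bound $2\norm{m_{\alpha_{0}(\phi)}}$. Neither of your proposed remedies is carried out, and the first (replacing $(m_{\alpha})$ by a bounded net) is not available in general, since the definition explicitly permits unbounded nets. The missing observation is that the right identity upgrades the pointwise condition to a norm statement: because $(e\cdot f)(a)=f(ae)=f(a)$ for all $f\in A^{*}$, one has $F\square e=F$ for every $F\in A^{**}$, so taking $a=e$ in $\pi_{A}^{**}(m_{\alpha})a\rightarrow a$ gives $\pi_{A}^{**}(m_{\alpha})\rightarrow e$ in norm. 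Hence $\abs{\tilde{\phi}(M_{\alpha})-1}=\abs{\tilde{\phi}(\pi_{A}^{**}(m_{\alpha})-e)}\leq \norm{\pi_{A}^{**}(m_{\alpha})-e}$ \emph{uniformly in} $\phi$, so one index $\alpha_{0}$ with $\norm{\pi_{A}^{**}(m_{\alpha_{0}})-e}<1/2$ serves all characters at once; since $\norm{T^{**}}\leq 1$, the constant $C=2\norm{m_{\alpha_{0}}}$ is then independent of $\phi$. With that one extra step your argument becomes a complete proof; without it, you have only shown $\phi$-amenability for each $\phi$, which is strictly weaker than the stated conclusion.
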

\begin{proof}
Similar to the proof of \cite[Lemma 3.5]{sah new amen1}.	
\end{proof}
\begin{lemma}\label{app left}
Let $A$ be an  approximately biflat Banach algebra with an
identity and $\Delta(A)\neq \emptyset$. Then $A$ is
$\phi$-amenable for every $\phi\in\Delta(A)$.
\end{lemma}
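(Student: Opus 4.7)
The plan is to combine Lemma \ref{net for app bi} with a $\phi$-twisting map $R_\phi:A\otimes_p A\to A$, pushing the net guaranteed by approximate biflatness down to $A^{**}$ in a way that turns $a\cdot m_\gamma=m_\gamma\cdot a$ into the $\phi$-mean relation $a\cdot n_\gamma=\phi(a)n_\gamma$, and then normalising.

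First, since $A$ is unital, the constant net $\{e\}$ is a central approximate identity, so Lemma \ref{net for app bi} produces a net $(m_\gamma)\subset (A\otimes_p A)^{**}$ with $a\cdot m_\gamma=m_\gamma\cdot a$ and $\pi_A^{**}(m_\gamma)a\xrightarrow{w^*}a$ for every $a\in A$. Specialising the latter convergence to $a=e$ and using that $e$ is an identity for the Arens product gives $\pi_A^{**}(m_\gamma)\xrightarrow{w^*}e$, whence $\tilde\phi(\pi_A^{**}(m_\gamma))\to\phi(e)=1$.

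Next I would introduce the bounded linear map $R_\phi:A\otimes_p A\to A$ specified on elementary tensors by $R_\phi(a\otimes b)=\phi(b)a$. A direct check on elementary tensors shows $R_\phi(c\cdot u)=c\cdot R_\phi(u)$ and $R_\phi(u\cdot c)=\phi(c)R_\phi(u)$ for all $c\in A$ and $u\in A\otimes_p A$. A standard weak-$*$ continuity and density argument transports these identities to $R_\phi^{**}$, so setting $n_\gamma=R_\phi^{**}(m_\gamma)\in A^{**}$, the relation $a\cdot m_\gamma=m_\gamma\cdot a$ becomes $a\cdot n_\gamma=\phi(a)\,n_\gamma$, which is exactly the first half of being a $\phi$-mean.

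For the second half I would observe that $R_\phi^{*}(\phi)=\phi\otimes\phi=\pi_A^{*}(\phi)$ in $(A\otimes_p A)^{*}$, since the two functionals agree on elementary tensors. Consequently
\[
\tilde\phi(n_\gamma)=\langle R_\phi^{**}(m_\gamma),\phi\rangle=\langle m_\gamma,R_\phi^{*}(\phi)\rangle=\langle m_\gamma,\pi_A^{*}(\phi)\rangle=\tilde\phi(\pi_A^{**}(m_\gamma))\longrightarrow 1,
\]
so $\tilde\phi(n_\gamma)\ne 0$ eventually, and $\tilde n_\gamma:=n_\gamma/\tilde\phi(n_\gamma)$ is a genuine $\phi$-mean, proving that $A$ is $\phi$-amenable. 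The only genuinely technical point in the argument is the transfer of the twisted bimodule identities from $R_\phi$ to $R_\phi^{**}$ with respect to the Arens-product $A$-actions on $(A\otimes_p A)^{**}$ and $A^{**}$; once that is in hand, the proof is a direct combination of Lemma \ref{net for app bi} with the identity $R_\phi^{*}(\phi)=\pi_A^{*}(\phi)$.
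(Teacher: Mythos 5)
Your argument is correct and follows essentially the same route as the paper: the paper also applies Lemma \ref{net for app bi} (the identity serving as the central approximate identity), introduces the very same map $T(a\otimes b)=\phi(b)a$ (your $R_\phi$), transfers the twisted module identities to $T^{**}$ by $w^*$-continuity, uses $\tilde\phi\circ T^{**}=\tilde\phi\circ\pi_A^{**}$ (your identity $R_\phi^{*}(\phi)=\pi_A^{*}(\phi)$), and normalises to obtain a $\phi$-mean. There is no substantive difference between the two proofs.
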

\begin{proof}
Suppose that $A$ is an  approximately biflat with an identity $e$. Then by Lemma \ref{net for app bi}, there exists  a net
$(m_{\alpha})$ in $(A\otimes_{p}A)^{**}$ such that $a\cdot
m_{\alpha}=m_{\alpha}\cdot a$ and
$\pi^{**}_{A}(m_{\alpha})a\xrightarrow{w^*} a$ for every $a\in A.$
So for
every $\epsilon>0$ there exists $\alpha^{\phi}_{\epsilon}$ such that
$$|\tilde{\phi}\circ \pi^{**}_{A}(m_{\alpha^{\phi}_{\epsilon}})-1|=|\tilde{\phi}\circ \pi^{**}_{A}(m_{\alpha^{\phi}_{\epsilon}})-\tilde{\phi}(e)|=|\pi^{**}_{A}(m_{\alpha^{\phi}_{\epsilon}})e(\phi)-e(\phi)|<\epsilon$$
 and $a\cdot
m_{\alpha^{\phi}_{\epsilon}}=m_{\alpha^{\phi}_{\epsilon}}\cdot a$.  Let $T:A\otimes_{p}A\rightarrow A$ be a map
defined by $T(a\otimes b)=\phi(b)a$ for every $a,b\in A.$ Since $\tilde{\phi}\circ T^{**}=\tilde{\phi}\circ\pi^{**}_{A}$, it follows that 
\begin{equation}\label{eq2}
|\tilde{\phi}\circ
T^{**}(m_{\alpha^{\phi}_{\epsilon}})-1|=|\tilde{\phi}(\pi_{A}^{**}(m_{\alpha^{\phi}_{\epsilon}}))-1|<\epsilon.
\end{equation} 
As we know that  $T^{**}$ is a $w^{*}$-continuous map, thus
$$T^{**}(a\cdot F)=a\cdot T^{**}(F),\quad \phi(a)T^{**}( F)=T^{**}(F\cdot a),\qquad (a\in A, F\in (A\otimes_{p}A)^{**}).$$
Then
$$a\cdot T^{**}(m_{\alpha^{\phi}_{\epsilon}})=T^{**}(a\cdot m_{\alpha^{\phi}_{\epsilon}})=T^{**}( m_{\alpha^{\phi}_{\epsilon}}\cdot a)=\phi(a)T^{**}( m_{\alpha^{\phi}_{\epsilon}})$$
for every $a\in A.$ 
Replacing  $T^{**}(m_{\alpha^{\phi}_{\epsilon}})$ by
$\frac{T^{**}(m_{\alpha^{\phi}_{\epsilon}})}{\tilde{\phi}\circ T^{**}(m_{\alpha^{\phi}_{\epsilon}})}$, we
may suppose  that
$$aT^{**}(m_{\alpha^{\phi}_{\epsilon}})=\phi(a)T^{**}(m_{\alpha^{\phi}_{\epsilon}}),\quad \tilde{\phi}\circ T^{**}(m_{\alpha^{\phi}_{\epsilon}})=1,$$
for every $a\in A.$ It shows that $A$ is left $\phi$-amenable.
\end{proof}
\section{Applications to Lipschitz algebras}

Let $X$ be a  metric space and $\alpha>0$. Also let $(E,||\cdot||)$ be a Banach space.  Set
$$Lip_{\alpha}(X,E)=\{f:X\rightarrow
E:\text{$f$ is bounded and }p_{\alpha,E}(f)<\infty\},$$ where
$$p_{\alpha,E}(f)=\sup\{\frac{||f(x)-f(y)||}{d(x,y)^{\alpha}}:x,y\in
X,x\neq y\}$$ and also $$\ell ip_{\alpha}(X,E)=\{f\in Lip_{\alpha}(X,E)
:\frac{||f(x)-f(y)||}{d(x,y)^{\alpha}}\rightarrow 0\quad
\text{as}\quad d(x,y)\rightarrow 0\}.$$ Define
$$||f||_{\alpha,E}=||f||_{\infty,E}+p_{\alpha,E}(f),$$ where $||f||_{\infty,E}=\sup_{x\in X}||f(x)||$. For each Banach algebra $E$,
with the pointwise multiplication and norm
$||\cdot||_{\alpha,E}$, $Lip_{\alpha}(X,E)$ and $\ell
ip_{\alpha}(X,E)$ become Banach algebras.
Also we denote $Lip_{\alpha}(X)$ for $Lip_{\alpha}(X,\mathbb{C})$ and $\ell
ip_{\alpha}(X)$ for $\ell
ip_{\alpha}(X,\mathbb{C})$, respectively.

If $X$ is a compact metric space, it is well-known that each
non-zero multiplicative  linear functional on $Lip_{\alpha}(X)$ (also on
$\ell ip_{\alpha}(X)$  ) has a form $\phi_{x}$ for some $x\in X$, where
$\phi_{x}(f)=f(x)$ for every $x\in X$. For further information about the
Lipschitz algebras see \cite{bade}, \cite{Lipschitz}, \cite{gord1} and \cite{dashti}. A metric space $(X,d)$ is called uniformly discrete, if there exists $\epsilon>0$ such that $d(x,y)>\epsilon$ for every $x,y\in X$ with $x\neq y.$
\begin{Theorem}
	Let $(X,d)$ be a metric space and $\alpha>0$ and let $E$ be a Banach algebra with a right identity with $\Delta(E)\neq \emptyset$. Suppose that $\ell ip_{\alpha}(X,E)$ or $Lip_{\alpha}(X,E)$ separates the elements of $X$ and $E.$ If $\ell ip_{\alpha}(X,E)$ or $Lip_{\alpha}(X,E)$ is Johnson pseudo-contractible, then $X$ is uniformly discrete and $E$ is Johnson pseudo-contractible.
\end{Theorem}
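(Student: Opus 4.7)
The plan is to exploit Proposition~\ref{c character amen} to convert Johnson pseudo-contractibility into $C$-character amenability, and then use a standard separation lower bound for distinct characters to force the metric on $X$ to be uniformly discrete. Write $A$ for either $\ell ip_{\alpha}(X,E)$ or $Lip_{\alpha}(X,E)$. The constant function $\mathbf{e}_{E}\colon x\mapsto e_{E}$, where $e_{E}$ is a right identity of $E$, belongs to $A$ and is a right identity for $A$. Moreover, for every $\psi\in\Delta(E)$ and every $x\in X$ the functional $\Psi_{x,\psi}(f):=\psi(f(x))$ is a nonzero character on $A$, so $\Delta(A)\neq\emptyset$. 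Since $A$ is Johnson pseudo-contractible with a right identity and nonempty character space, Proposition~\ref{c character amen} yields a constant $C\geq 1$ such that $A$ is $C$-character amenable; in particular, for each $\Phi\in\Delta(A)$ there is a $\Phi$-mean $m_{\Phi}\in A^{**}$ with $\|m_{\Phi}\|\leq C$.

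The second step is a standard consequence of character amenability. If $\Phi_{1},\Phi_{2}\in\Delta(A)$ are distinct, then applying $\tilde{\Phi}_{2}$ to both sides of $a m_{\Phi_{1}}=\Phi_{1}(a)m_{\Phi_{1}}$ gives $\Phi_{2}(a)\tilde{\Phi}_{2}(m_{\Phi_{1}})=\Phi_{1}(a)\tilde{\Phi}_{2}(m_{\Phi_{1}})$ for every $a\in A$, so $\tilde{\Phi}_{2}(m_{\Phi_{1}})=0$; hence $\langle\Phi_{1}-\Phi_{2},m_{\Phi_{1}}\rangle=1$ and $\|\Phi_{1}-\Phi_{2}\|_{A^{*}}\geq 1/C$. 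Now fix any $\psi\in\Delta(E)$; by the separation hypothesis the characters $\Psi_{x,\psi}$ are pairwise distinct as $x$ varies in $X$. Using $\|\psi\|\leq 1$ and $\|f(x)-f(y)\|\leq p_{\alpha,E}(f)\,d(x,y)^{\alpha}$, I obtain the direct estimate
\[
\|\Psi_{x,\psi}-\Psi_{y,\psi}\|_{A^{*}}
=\sup_{\|f\|_{\alpha,E}\leq 1}|\psi(f(x)-f(y))|
\leq d(x,y)^{\alpha}.
\]
Combining this with the lower bound $1/C$ yields $d(x,y)\geq C^{-1/\alpha}>0$ whenever $x\neq y$, so $(X,d)$ is uniformly discrete.

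Finally, to show that $E$ is Johnson pseudo-contractible, fix $x_{0}\in X$ and consider the evaluation $\mathrm{ev}_{x_{0}}\colon A\to E$, $f\mapsto f(x_{0})$. This is a bounded algebra homomorphism, and it is surjective because every $u\in E$ is the image of the constant function $x\mapsto u$, which lies in $A$. Push the witnessing net $(m_{\gamma})\subset (A\otimes_{p}A)^{**}$ for $A$ through $T:=\mathrm{ev}_{x_{0}}\otimes \mathrm{ev}_{x_{0}}\colon A\otimes_{p}A\to E\otimes_{p}E$, setting $n_{\gamma}:=T^{**}(m_{\gamma})$. From $T(a\cdot z)=\mathrm{ev}_{x_{0}}(a)\cdot T(z)$, $T(z\cdot a)=T(z)\cdot \mathrm{ev}_{x_{0}}(a)$ and $\pi_{E}\circ T=\mathrm{ev}_{x_{0}}\circ\pi_{A}$, together with the $w^{*}$-$w^{*}$ continuity of $T^{**}$ and $\mathrm{ev}_{x_{0}}^{**}$, one deduces $b\cdot n_{\gamma}=n_{\gamma}\cdot b$ and $\pi_{E}^{**}(n_{\gamma})\cdot b=\mathrm{ev}_{x_{0}}^{**}(\pi_{A}^{**}(m_{\gamma})\cdot a)\to \mathrm{ev}_{x_{0}}(a)=b$ for every $b=\mathrm{ev}_{x_{0}}(a)$; hence $E$ is Johnson pseudo-contractible.

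The main obstacle is the second paragraph: one must unpack the hypothesis that ``$A$ separates the elements of $X$ and $E$'' to conclude that $\Psi_{x,\psi}\neq \Psi_{y,\psi}$ for every pair $x\neq y$ and some fixed $\psi\in\Delta(E)$, so that the lower bound $1/C$ can actually be invoked on these characters. Once the separation is correctly used to ensure distinctness, the rest reduces to the direct estimate above and the routine push-forward of the witnessing net through $\mathrm{ev}_{x_{0}}\otimes \mathrm{ev}_{x_{0}}$.
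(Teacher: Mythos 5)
Your proof is correct and follows the same skeleton as the paper's: obtain a right identity for $A$ from the right identity of $E$, invoke Proposition \ref{c character amen} to get $C$-character amenability, deduce uniform discreteness of $X$, and transfer Johnson pseudo-contractibility to $E$ through the surjective evaluation homomorphism at $x_{0}$. The difference is that the paper disposes of the last two steps by citation --- uniform discreteness via \cite[Lemma 3.1]{Biya} and the hereditary property via \cite[Proposition 2.9]{sah new amen1} --- whereas you prove both by hand: the quantitative separation $\norm{\Phi_{1}-\Phi_{2}}_{A^{*}}\geq 1/C$ for distinct characters admitting bounded means, paired with the H\"older-type upper bound $\norm{\Psi_{x,\psi}-\Psi_{y,\psi}}_{A^{*}}\leq d(x,y)^{\alpha}$, and the explicit push-forward of the witnessing net through $\mathrm{ev}_{x_{0}}\otimes\mathrm{ev}_{x_{0}}$. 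What your version buys is a self-contained and quantitative argument (it even produces the explicit bound $d(x,y)\geq C^{-1/\alpha}$); what the paper's version buys is brevity and insulation from the precise formulation of the separation hypothesis, which is absorbed into the cited lemma.

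One point deserves care, and you flag it yourself: your lower bound is applied to the characters $\Psi_{x,\psi}$ for a \emph{fixed} $\psi\in\Delta(E)$, so you need $\psi(f(x))\neq\psi(f(y))$ for some $f\in A$ whenever $x\neq y$. The bare statement ``$A$ separates the elements of $X$'' only guarantees $f(x)\neq f(y)$ for some $f$, and if $f(x)-f(y)$ happened to lie in $\bigcap_{\psi\in\Delta(E)}\ker\psi$ no character of $E$ would detect it. This is harmless under the usual reading of the hypothesis in \cite{Biya} (for $x\neq y$ and $a,b\in E$ there is $f\in A$ with $f(x)=a$, $f(y)=b$), since one may then prescribe values separated by a chosen $\psi$; but the distinctness of the $\Psi_{x,\psi}$ should be stated as following from that stronger form, not from pointwise separation alone. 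With that clarification the argument is complete.
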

\begin{proof}
	Let $A$ be $\ell ip_{\alpha}(X,E)$ or $Lip_{\alpha}(X,E)$. Since $E$ has a right identity, $A$ has a right identity. Using Johnson pseudo-contractibility of $A$ and Proposition \ref{c character amen}, we have $A$ is $C$-character amenable.
	By \cite[Lemma 3.1]{Biya} $X$ is uniformly discrete. Let $x_{0}\in X$. Define $\phi_{x_{0}}:A\rightarrow E$ by $\phi_{x_{0}}(f)=f(x_{0})$. Clearly $\phi_{x_{0}}$ is a  homomorphism and onto bounded linear map. Since $A$ is Johnson pseudo-contractible by \cite[Proposition 2.9]{sah new amen1}, $E$
is Johnson pseudo-contractible.	
\end{proof}
\begin{Proposition}
	Let $(X,d)$ be a metric space and $\alpha>0$ and let $E$ be a Banach algebra with an identity which $\Delta(E)\neq \emptyset$. Suppose that $A= \ell ip_{\alpha}(X,E)$ or $A=Lip_{\alpha}(X,E)$ separates the elements of $X$ and $E.$ 
	Then the following statements are equivalent:
	\begin{enumerate}
		\item [(i)] $A$ is Johnson pseudo-contractible.
		\item [(ii)] $X$ is uniformly discrete.
		\item [(iii)] $A$ is amenable.
	\end{enumerate}
\end{Proposition}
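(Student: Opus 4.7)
The plan is to prove the three conditions equivalent via the cyclic chain $(iii) \Rightarrow (i) \Rightarrow (ii) \Rightarrow (iii)$.

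The implication $(iii) \Rightarrow (i)$ is essentially immediate from the definitions. If $A$ is amenable it admits a virtual diagonal $M \in (A \otimes_p A)^{**}$ satisfying $a \cdot M = M \cdot a$ and $\pi_A^{**}(M) a = a$ for every $a \in A$, so the constant net $m_\alpha \equiv M$ directly witnesses Johnson pseudo-contractibility.

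For $(i) \Rightarrow (ii)$ I would simply quote the preceding theorem of this section. Since $E$ is unital it has a right identity, and all other required hypotheses (nonempty character space of $E$ and separation of the points of $X$ and $E$ by $A$) are granted; the theorem's conclusion then delivers uniform discreteness of $X$.

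The substantive implication is $(ii) \Rightarrow (iii)$. Assuming $d(x, y) \geq \varepsilon > 0$ for all distinct $x, y \in X$, I would first observe that any bounded $f \colon X \to E$ satisfies $p_{\alpha, E}(f) \leq 2 \|f\|_{\infty, E} / \varepsilon^\alpha$, so the Lipschitz seminorm is automatically finite, and the little-lip condition is vacuous since no pair $x_n \neq y_n$ can satisfy $d(x_n, y_n) \to 0$. Consequently $\ell ip_\alpha(X, E) = Lip_\alpha(X, E)$ coincides with $\ell^\infty(X, E)$ as a Banach algebra, with $\|\cdot\|_{\alpha, E}$ equivalent to the sup norm. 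From this reduction, amenability of $A$ would follow from standard results on amenability of $\ell^\infty$-sums built over the unital base algebra $E$, together with the preservation of amenability under equivalent renormings.

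The main obstacle is the final conclusion in $(ii) \Rightarrow (iii)$: confirming that $\ell^\infty(X, E)$ is amenable. In the scalar case $E = \mathbb{C}$ this is standard, since the algebra is a commutative $C^*$-algebra; for general unital $E$ the conclusion seems to require amenability of $E$ itself. I would check whether the hypotheses (unital $E$ with $\Delta(E) \neq \emptyset$, plus the separation condition on $A$) silently ensure this, or whether one must invoke the $\phi$-amenability machinery from Lemma \ref{app left} and Proposition \ref{c character amen} together with the preceding theorem (which already gives $E$ Johnson pseudo-contractible from $(i)$) to close the loop.
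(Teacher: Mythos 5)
Your implications (iii)$\Rightarrow$(i) and (i)$\Rightarrow$(ii) are sound. The second one, obtained by quoting the preceding Theorem of the section, is a legitimate route that differs slightly from the paper's: the paper instead observes that $A$ is unital, passes from Johnson pseudo-contractibility of $A$ to amenability of $A$ via \cite[Theorem 1.1]{mehdi}, and then gets uniform discreteness from \cite[Theorem 3.4]{Biya}. Either way works.

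The genuine gap is exactly where you flagged it, in (ii)$\Rightarrow$(iii), and it is not something you can patch with "standard results on amenability of $\ell^\infty$-sums." Your reduction of $A$ to $\ell^\infty(X,E)$ with an equivalent norm under uniform discreteness is correct, but amenability of $\ell^\infty(X,E)$ genuinely requires amenability of $E$: evaluation at a point is a bounded homomorphism onto $E$ (surjective by the separation/covering hypothesis and unitality of $E$), and quotients of amenable algebras are amenable. Amenability of $E$ is neither a hypothesis of the Proposition nor derivable from (ii) alone --- note that for $X$ a singleton, (ii) holds vacuously while $A\cong E$, so the implication (ii)$\Rightarrow$(iii) would assert that every unital $E$ with a character is amenable. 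The knowledge that $E$ is Johnson pseudo-contractible, which you mention as a possible rescue, is only available along the arrows that start from (i); it cannot be used inside an argument whose sole premise is (ii). The paper itself does not supply the missing step: it disposes of (ii)$\Leftrightarrow$(iii) with the single sentence that it ``is clear by \cite[Theorem 3.4]{Biya},'' and that cited equivalence is precisely where amenability of $E$ enters (or is required). So your proposal is incomplete as written, and your suspicion that the stated hypotheses do not silently ensure amenability of $E$ is correct; an honest proof of (ii)$\Rightarrow$(iii) needs that as an additional assumption or must lean entirely on the cited theorem of Biyabani--Rejali under whatever hypotheses it actually carries.
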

\begin{proof}
(i)$\Leftrightarrow$(ii)	Since $E$ is unital,  by \cite[Theorem 1.1]{mehdi} $E$ is amenable. Clearly $A$ is a unital Banach algebra. Also by \cite[Theorem 1.1]{mehdi}  , Johnson pseudo-contractibility of $A$ implies that $A$ is amenable. Applying \cite[Theorem 3.4]{Biya} finishes the proof.  

(ii)$\Leftrightarrow$(iii) It is clear by \cite[Theorem 3.4]{Biya}.
\end{proof}
\begin{Theorem}
Let $X$ be a compact metric space and let $A$ be $Lip_{\alpha}(X)$ or
$\ell ip_{\alpha}(X)$  with $0<\alpha<1$. Then the following statements are equivalent
\begin{enumerate}
\item [(i)] $A$ is approximately biflat.
\item [(ii)] $X$ is finite.
\item [(iii)] $A$ is amenable.
\end{enumerate}
\end{Theorem}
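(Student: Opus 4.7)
The plan is to close the triangle (iii)$\Rightarrow$(i)$\Rightarrow$(ii)$\Rightarrow$(iii), with only the middle implication requiring genuine work; the other two are essentially citations.

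For (iii)$\Rightarrow$(i), amenability implies biflatness, which trivially implies approximate biflatness (the single bimodule morphism $\rho$ witnessing biflatness gives a constant net). For (ii)$\Leftrightarrow$(iii), I would simply invoke \cite[Theorem 3.4]{Biya}, already used in the preceding Proposition, which characterizes amenability of $Lip_{\alpha}(X)$ and $\ell ip_{\alpha}(X)$ for $0<\alpha<1$ on a compact metric space in terms of finiteness of $X$.

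The main direction is (i)$\Rightarrow$(ii). First I would observe that since $X$ is compact, the constant function $\mathbf{1}$ satisfies $p_{\alpha}(\mathbf{1})=0$ and so lies in both $\ell ip_{\alpha}(X)$ and $Lip_{\alpha}(X)$; hence $A$ is unital. Because $X$ is compact, every character of $A$ has the form $\phi_{x}$ for some $x\in X$, so $\Delta(A)\neq\emptyset$. Now Lemma \ref{app left} applies to give that $A$ is $\phi$-amenable for every $\phi\in\Delta(A)$. Combined with the bounded approximate identity provided by the identity element, this yields character amenability of $A$. At this point I would feed $A$ into \cite[Lemma 3.1]{Biya} (again as in the preceding theorem) to conclude that $X$ is uniformly discrete. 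A uniformly discrete compact metric space is necessarily finite, so (ii) follows.

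The step I expect to require the most care is confirming that $\phi$-amenability for every $\phi\in\Delta(A)$, together with the identity, is exactly what is needed to invoke \cite[Lemma 3.1]{Biya}; this is the same bridge used in the preceding theorem (where Proposition \ref{c character amen} supplied character amenability from Johnson pseudo-contractibility), so the mechanism is already in place. The only conceptual subtlety is that Lemma \ref{app left} hands us $\phi$-amenability one character at a time rather than with a uniform constant, but since the Biya lemma is applied character by character on point evaluations, no uniformity is lost. Once character amenability is established, the rest is a one-line topological remark about compact uniformly discrete spaces.
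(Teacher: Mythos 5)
Your overall architecture coincides with the paper's: both proofs establish (iii)$\Rightarrow$(i) by upgrading a virtual diagonal to a biflatness morphism, dispose of (ii)$\Leftrightarrow$(iii) by citation (the paper uses Gourdeau \cite{gord} for (ii)$\Rightarrow$(iii); your appeal to \cite[Theorem 3.4]{Biya} is an acceptable substitute), and both drive (i)$\Rightarrow$(ii) through Lemma \ref{app left} to get $\phi$-amenability for every character, hence character amenability since $A$ is unital. Where you genuinely diverge is the last leg of (i)$\Rightarrow$(ii): the paper does \emph{not} use \cite[Lemma 3.1]{Biya} here. Instead it argues by contradiction at a non-isolated point $x_{0}$, citing \cite[Theorem 4.4.30(iv)]{dales} (the maximal ideal $\ker\phi_{x_{0}}$ of a Lipschitz algebra at a non-isolated point has no bounded approximate identity) together with \cite[Lemma 3.3]{Hu} ($\phi$-amenability plus a bounded approximate identity forces $\ker\phi$ to have one); this shows every point of $X$ is isolated, and compactness finishes.

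The reason this matters is exactly the "subtlety" you flagged and then waved away. As the paper states it and uses it in the preceding theorem, \cite[Lemma 3.1]{Biya} is invoked only after establishing $C$-character amenability, i.e. the existence of $\phi$-means with a \emph{uniform} norm bound $C$ across all characters (supplied there by Proposition \ref{c character amen}). Lemma \ref{app left} gives you a $\phi$-mean for each $\phi$ with no norm control whatsoever, so you have not verified the hypothesis of the Biya lemma, and your assertion that "no uniformity is lost" is unsupported: a plausible proof of that lemma bounds $d(x,y)^{-\alpha}$ by a quantity involving the norms of the means, which is precisely where a uniform $C$ would enter. The gap is probably repairable in your framework --- for compact $X$ you only need each point to be isolated, not uniform discreteness, and an unbounded $\phi_{x}$-mean may still suffice for that pointwise conclusion --- but as written the step is not justified. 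The paper's route via \cite{dales} and \cite{Hu} is the safe one, since those results require only unquantified $\phi$-amenability together with the identity element.
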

\begin{proof}
(i)$\Rightarrow$(ii) Let  $A$  be an approximately biflat Banach algebra. Since $A$ has an identity, by  Lemma \ref{app left},  $A$ is $\phi$-amenable for every $\phi\in\Delta(A)$.  On the other hand the existence of an identity for $A$ follows that $A$ is character amenable. Suppose, towards a contradiction, that $X$ is infinite and $x_{0}\in X$ is not isolated point of $X$. Since by \cite[ Theorem 4.4.30(iv)]{dales}, $\ker \phi_{x_{0}} $ does not have a bounded approximate identity, \cite[Lemma 3.3]{Hu} implies that $A$ is not character amenable, which is impossible. It implies that $X$ is discrete, so $X$ is finite.

(ii)$\Rightarrow$(iii) See \cite[Theorem 3]{gord}.

(iii)$\Rightarrow$(i)  Suppose that $A$ is amenable. Then there  exists an element $M\in (A\otimes_{p}A)^{**}$ such that $a\cdot M=M\cdot a$ such that $\pi^{**}_{A}(M)a=a,$ for each $a\in A$. Define $\rho:A\rightarrow (A\otimes_{p}A)^{**}$ by $\rho(a)=a\cdot M$. It is easy to see that $\rho$ is a bounded $A$-bimodule morphism and $A$ is biflat, see \cite[Lemma 4.3.22]{run}. It follows that $A$ is approximately biflat.
\end{proof}
\section{Applications to Triangular Banach algebras}
Let $A$ be a Banach algebra and $\phi\in\Delta(A)$. Suppose that $X$
is a Banach left  $A$-module. A non-zero linear functional $\eta\in
X^{*}$ is called left $\phi$-character if $\eta(a\cdot
x)=\phi(a)\eta(x)$ and it is called  right $\phi$-character  if $\eta(x\cdot
a)=\phi(a)\eta(x)$. A left and a right $\phi$-character is called
$\phi$-character. Note that if $A$ is a Banach algebra and
$\phi\in\Delta(A)$, then $\phi\otimes\phi$  on $A\otimes_{p}A$ (defined by $\phi\otimes\phi(a\otimes b)=\phi(a)\phi(b)$) and $\tilde{\phi}$ on
$A^{**}$ are $\phi$-characters.

Let $A$ and $B$ be  Banach algebras and let $X$ be a Banach
$(A,B)$-module. That is, $X$ is a Banach left
$A$-module and a Banach right $B$-module that satisfy $(a\cdot x)\cdot b=a\cdot(x\cdot b)$ and $||a\cdot x\cdot b||\leq ||a||||x||||b||$ for every $a\in A$, $b\in B$ and $x\in X$.
Consider 

$$T=Tri(A,B,X)=\set{\left(\begin{array}{cc} a&x\\
	0&b\\
	\end{array}\right):a\in A,x\in X,b\in B},$$
with the usual matrix operations and  $$||\left(\begin{array}{cc} a&x\\
0&b\\
\end{array}\right)||=||a||+||x||+||b||\quad(a\in A,x\in X, b\in B)$$  $T$ becomes a Banach algebra which is called triangular Banach algebra.
Let $\phi\in\Delta(B)$. We define a character  $\psi_{\phi}\in\Delta(T)$  via  $\psi_{\phi}\left(\begin{array}{cc} a&x\\
0&b\\
\end{array}
\right)=\phi(b)$ for every $a\in A$, $b\in B$ and $x\in X$.
\begin{Theorem}\label{main}
	Let $T=Tri(A,B,X)$ be a triangular Banach algebra such that $A$ and $B$ have a central approximat identity ($\Delta(B)\neq \emptyset$).
 If one of the
	followings  hold
	\begin{enumerate}
		\item [(i)] B is not left $\phi$-amenable,
		\item [(ii)] $X$ has a right $\phi$-character,
	\end{enumerate}
	then $T$ is not approximately biflat.
\end{Theorem}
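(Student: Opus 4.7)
The plan is to argue by contradiction: assume $T$ is approximately biflat and deduce that $B$ must be left $\phi$-amenable and $X$ can support no right $\phi$-character, which contradicts hypotheses (i) and (ii) respectively. First, combining the central approximate identities of $A$ and $B$ into block-diagonal matrices supplies a central approximate identity for $T$, so Lemma \ref{net for app bi} applies and produces a net $(m_\gamma) \subset (T\otimes_p T)^{**}$ with $t\cdot m_\gamma = m_\gamma\cdot t$ and $\pi_T^{**}(m_\gamma)\,t \xrightarrow{w^*} t$ for every $t\in T$. The observation that drives both parts is the factorisation $\pi_T^{*}(\psi_\phi) = \psi_\phi\otimes\psi_\phi$: combined with the identity $\tilde{\psi_\phi}(F\cdot t) = \psi_\phi(t)\,\tilde{\psi_\phi}(F)$ and any $t_0 \in T$ with $\psi_\phi(t_0)\neq 0$, it yields
\[
\langle m_\gamma,\,\psi_\phi\otimes\psi_\phi\rangle \;=\; \tilde{\psi_\phi}\bigl(\pi_T^{**}(m_\gamma)\bigr) \longrightarrow 1.
\]

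For part (ii), I would take the hypothesised right $\phi$-character $\eta\in X^*$, fix $y\in X$ with $\eta(y)\neq 0$, and introduce the test functional $\omega = \eta_1 \otimes \psi_\phi \in (T\otimes_p T)^*$, where $\eta_1\left(\begin{smallmatrix} a & x\\ 0 & b\end{smallmatrix}\right) = \eta(x)$. Testing the commutation $\tilde{y}\cdot m_\gamma = m_\gamma\cdot \tilde{y}$ against $\omega$, with $\tilde{y}$ the matrix having $y$ in the $(1,2)$-slot and zeros elsewhere, should (after a short computation of the dual actions, using $\eta(yb)=\phi(b)\eta(y)$ on one side and $\psi_\phi(\tilde{y})=0$ on the other) yield $\eta(y)\langle m_\gamma,\,\psi_\phi\otimes\psi_\phi\rangle = 0$. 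Since $\eta(y)\neq 0$, this contradicts the limit above.

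For part (i), I would manufacture a $\phi$-mean for $B$ out of $m_\gamma$ by defining $\sigma\colon T\otimes_p T \to B$ via $\sigma(t_1\otimes t_2) = \psi_\phi(t_2)\,p(t_1)$, where $p\left(\begin{smallmatrix} a & x\\ 0 & b\end{smallmatrix}\right)=b$ is the canonical projection, and setting $n_\gamma = \sigma^{**}(m_\gamma)\in B^{**}$. The identity $\sigma^{*}(\phi) = \psi_\phi\otimes\psi_\phi$ gives $\tilde{\phi}(n_\gamma)\to 1$ for free, while feeding $\omega' = \sigma^{*}(g)$ for $g\in B^*$ through the commutation relation with $\tilde{b} = \left(\begin{smallmatrix} 0 & 0\\ 0 & b\end{smallmatrix}\right)$ should produce $b\cdot n_\gamma = \phi(b)\,n_\gamma$. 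Normalising by $\tilde{\phi}(n_\gamma)$ (eventually nonzero) then yields a genuine $\phi$-mean for $B$, contradicting the failure of left $\phi$-amenability assumed in (i).

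The main obstacle is purely bookkeeping: the two Arens-type dual module actions on $(T\otimes_p T)^{**}$ have to be tracked carefully, and one must verify a handful of identities of the shape $\omega\cdot t = (\text{scalar})\cdot(\psi_\phi\otimes\psi_\phi)$ and $t\cdot\omega = (\text{scalar})\cdot\omega$ for the particular $\omega$'s and $t$'s that arise. Once those identities are in place, the scalar-valued factorisation $\pi_T^{*}(\psi_\phi) = \psi_\phi\otimes\psi_\phi$ together with the single test functional $\psi_\phi\otimes\psi_\phi$ does the real work in both parts.
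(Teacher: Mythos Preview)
Your argument is correct: the dual--action identities you flag as ``bookkeeping'' do check out (one finds $\omega\cdot\tilde y=\eta(y)\,(\psi_\phi\otimes\psi_\phi)$ while $\tilde y\cdot\omega=0$, and $\sigma$ intertwines the left $T$-action with the left $B$-action via $p$ while turning the right $T$-action into scalar multiplication by $\psi_\phi$), so both contradictions go through exactly as you outline. Your route, however, genuinely differs from the paper's. After obtaining a central approximate identity for $T$ and invoking the net of Lemma~\ref{net for app bi}, the paper does not remain in $(T\otimes_pT)^{**}$ and probe with test functionals. Instead it first records that $T$ is left $\psi_\phi$-amenable (as in Lemma~\ref{app left}), then passes to the closed ideal $I=\left(\begin{smallmatrix}0&X\\0&B\end{smallmatrix}\right)$ via the hereditary result \cite[Lemma~3.1]{kan}, and finally applies \cite[Theorem~1.4]{kan} to extract a net $m_\alpha=\left(\begin{smallmatrix}0&x_\alpha\\0&b_\alpha\end{smallmatrix}\right)$ inside $I$ itself with $am_\alpha-\psi_\phi(a)m_\alpha\to0$ and $\phi(b_\alpha)=1$; reading the $B$-coordinate handles~(i), and applying $\eta$ to the $X$-coordinate relation $x_0b_\alpha-\phi(b_0)x_\alpha\to0$ handles~(ii). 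Your approach buys self-containment (no appeals to \cite{kan}) and a uniform ``slice--and--test'' mechanism, at the price of heavier second-dual calculus; the paper's approach outsources two lemmas but, by descending to concrete elements of $I$, makes the coordinate-wise extraction in $B$ and $X$ immediate. One caveat both arguments share: the block-diagonal net assembled from central approximate identities of $A$ and $B$ need not be literally central in $T$ (the off-diagonal slot gives $e_\alpha\cdot x$ versus $x\cdot f_\beta$), so the claim that $T$ has a central approximate identity is being used tacitly rather than proved.
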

\begin{proof}
	Suppose, in contradiction, that
	$T$ is approximately biflat. Since $T$ has a central approximate identity, by similar  argument as in  the	
	 Lemma \ref{app left}, 
	 $T$ is left $\psi_{\phi}$-amenable. Clearly $I=\left(\begin{array}{cc} 0&X\\
	0&B\\
	\end{array}
	\right)$ is a closed ideal of $T$ and $\psi_{\phi}|_{I}\neq 0$,
	then by \cite[Lemma 3.1]{kan}
	 $I$ is left
	$\psi_{\phi}$-amenable. Thus by \cite[Theorem 1.4]{kan} there exists a net  $(m_{\alpha})$ in $ I$ such that $am_{\alpha}-\psi_{\phi}|_{I}(a)m_{\alpha}\rightarrow 0$ and$\psi_{\phi}|_{I}(m_{\alpha})=1$, where $a\in I$.
	Let  $x_{\alpha}\in X$ and $b_{\alpha}\in B$ be
	such that $m_{\alpha}=\left(\begin{array}{cc} 0&x_{\alpha}\\
	0&b_{\alpha}\\
	\end{array}
	\right)$. Then we have $\psi_{\phi}\left(\begin{array}{cc}0&x_{\alpha}\\
	0&b_{\alpha}\\
	\end{array}
	\right)=\phi(b_{\alpha})=1$ and
	\begin{equation}\label{e4-2}
	\left(\begin{array}{cc} 0&x_{0}\\
	0&b_{0}\\
	\end{array}
	\right)\left(\begin{array}{cc}0&x_{\alpha}\\
	0&b_{\alpha}\\
	\end{array}
	\right)-\psi_{\phi}\left(\begin{array}{cc} 0&x_{0}\\
	0&b_{0}\\
	\end{array}
	\right)\left(\begin{array}{cc} 0&x_{\alpha}\\
	0&b_{\alpha}\\\end{array}
	\right)
	\rightarrow 0
	\end{equation} for each $x_{0}\in X$ and $b_{0}\in B.$ Using
	(\ref{e4-2}) we obtain
	$b_{\alpha}b_{0}-\phi(b_{0})b_{\alpha}\rightarrow 0$ and since  $\phi(b_{\alpha})=1$, we see that   $B$
	is left $\phi$-amenable, which contradicts (i).
	
	Now suppose that the statement (ii) holds. Then from (\ref{e4-2}) we have
	$x_{0}b_{\alpha}-\phi(b)x_{\alpha}\rightarrow 0$. By hypothesis from (ii)  there exists a  right
	$\phi$-character $\eta\in X^{*} $. Applying $\eta $ on $x_{0}b_{\alpha}-\phi(b)x_{\alpha}\rightarrow 0$, we have   $\eta(x_{0}b_{\alpha})-\phi(b)\eta(x_{\alpha})\rightarrow 0$ for every $b\in B$ and $x\in X$, which is
	impossible (take $b\in \ker\phi$, implies that $\eta$ is zero), that is, (ii) does not hold.
\end{proof}
It is well-known that,
if $X$ is a compact metric space, then $Lip_{\alpha}(X)$ is unital and the character space $Lip_{\alpha}(X)$ is non-empty, so we have the following corollary.
\begin{cor}
	Suppose that $X$ is a compact metric space. 
		Then $$T=Tri(Lip_{\alpha}(X),Lip_{\alpha}(X),Lip_{\alpha}(X))$$  is not approximately biflat.
\end{cor}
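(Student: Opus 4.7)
The plan is to derive this corollary as a direct instance of Theorem \ref{main}, taking $A = B = Lip_{\alpha}(X)$ and the $(A,B)$-bimodule (playing the role of ``$X$'' in that theorem) also to be $Lip_{\alpha}(X)$ with pointwise multiplication as both actions.

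First I would verify the standing hypotheses. Since $X$ is compact, $Lip_{\alpha}(X)$ is a unital commutative Banach algebra: the constant function $1$ is a two-sided identity, so both $A$ and $B$ trivially possess a central approximate identity. Moreover, for each $x_0\in X$ the point evaluation $\phi_{x_0}(f)=f(x_0)$ lies in $\Delta(Lip_{\alpha}(X))$, so in particular $\Delta(B)\neq\emptyset$.

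Next I would verify hypothesis (ii) of Theorem \ref{main} by producing a right $\phi$-character on the bimodule. Fix any $x_0\in X$ and set $\phi=\phi_{x_0}\in\Delta(B)$; I claim that $\eta=\phi_{x_0}$, viewed as an element of $Lip_{\alpha}(X)^{*}$, is a right $\phi$-character. Indeed, since the right $B$-action on the module $Lip_{\alpha}(X)$ is pointwise multiplication,
\[
\eta(f\cdot b)=(fb)(x_0)=f(x_0)\,b(x_0)=\phi(b)\,\eta(f)\qquad (f,b\in Lip_{\alpha}(X)).
\]
Invoking Theorem \ref{main} with this choice then immediately gives that $T$ is not approximately biflat. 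There is essentially no obstacle here: the only conceptual point is to observe that a character of a commutative unital Banach algebra automatically serves as a right (and left) $\phi$-character on the algebra regarded as a bimodule over itself, after which the whole corollary is a one-line specialisation of Theorem \ref{main}.
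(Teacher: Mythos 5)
Your proposal is correct and matches the paper's intended argument: the paper likewise justifies the corollary by noting that $Lip_{\alpha}(X)$ is unital with non-empty character space and then invoking Theorem \ref{main}, with a point evaluation $\phi_{x_0}$ serving as the right $\phi$-character required by hypothesis (ii). Your write-up simply makes explicit the verification that the paper leaves implicit.
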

\begin{Theorem}
	Let $T=Tri(A,B,X)$ be a triangular Banach algebra with  $\Delta(B)\neq \emptyset$.
	If one of the
	followings  hold
	\begin{enumerate}
		\item [(i)] B is not left $\phi$-amenable.
		\item [(ii)] $X$ has a right $\phi$-character.
	\end{enumerate}
	Then $T$ is not Johnson pseudo-contractible.
\end{Theorem}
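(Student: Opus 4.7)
The plan is to mimic the proof of Theorem \ref{main}, replacing the construction of a commuting net in $(T\otimes_{p}T)^{**}$ via approximate biflatness together with a central approximate identity (for which Lemma \ref{net for app bi} was invoked) by the net supplied directly by Johnson pseudo-contractibility. Assume for contradiction that $T$ is Johnson pseudo-contractible, and pick $(m_{\alpha})\subseteq (T\otimes_{p}T)^{**}$ with $t\cdot m_{\alpha}=m_{\alpha}\cdot t$ and $\pi^{**}_{T}(m_{\alpha})t\xrightarrow{w^{*}}t$ for every $t\in T$.

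The first step is to deduce left $\psi_{\phi}$-amenability of $T$, along the lines of Lemma \ref{app left}. Define $R:T\otimes_{p}T\rightarrow T$ by $R(s\otimes t)=\psi_{\phi}(t)s$; then $R$ is a left $T$-module map with $R(t\cdot s)=\psi_{\phi}(s)R(t)$, and $\tilde{\psi}_{\phi}\circ R=\tilde{\psi}_{\phi}\circ\pi_{T}$ since both sides equal $\psi_{\phi}\otimes\psi_{\phi}$. These identities transfer to $R^{**}$ by $w^{*}$-continuity of $R^{**}$ and of $\tilde{\psi}_{\phi}$. Applying $R^{**}$ to $t\cdot m_{\alpha}=m_{\alpha}\cdot t$ gives $t\cdot R^{**}(m_{\alpha})=\psi_{\phi}(t)R^{**}(m_{\alpha})$. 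Choosing $t_{0}\in T$ with $\psi_{\phi}(t_{0})=1$ and pairing $\tilde{\psi}_{\phi}$ against $\pi^{**}_{T}(m_{\alpha})t_{0}\xrightarrow{w^{*}}t_{0}$ yields $\tilde{\psi}_{\phi}(R^{**}(m_{\alpha}))=\tilde{\psi}_{\phi}(\pi^{**}_{T}(m_{\alpha}))\rightarrow 1$, so eventually these scalars are nonzero and the normalisation $R^{**}(m_{\alpha})/\tilde{\psi}_{\phi}(R^{**}(m_{\alpha}))$ produces a $\psi_{\phi}$-mean for $T$.

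The remainder follows the template of Theorem \ref{main} verbatim. The closed ideal
$$I=\left\{\left(\begin{array}{cc}0 & x \\ 0 & b\end{array}\right):x\in X,\;b\in B\right\}$$
of $T$ satisfies $\psi_{\phi}|_{I}\neq 0$, so by \cite[Lemma 3.1]{kan} it is left $\psi_{\phi}$-amenable, and \cite[Theorem 1.4]{kan} delivers a net $(n_{\alpha})\subseteq I$ with $an_{\alpha}-\psi_{\phi}(a)n_{\alpha}\rightarrow 0$ for $a\in I$ and $\psi_{\phi}(n_{\alpha})=1$. Writing $n_{\alpha}=\left(\begin{array}{cc}0 & x_{\alpha} \\ 0 & b_{\alpha}\end{array}\right)$ and multiplying against arbitrary $\left(\begin{array}{cc}0 & x_{0} \\ 0 & b_{0}\end{array}\right)\in I$, the $B$-coordinate yields $b_{0}b_{\alpha}-\phi(b_{0})b_{\alpha}\rightarrow 0$ with $\phi(b_{\alpha})=1$, contradicting (i); the $X$-coordinate yields $x_{0}b_{\alpha}-\phi(b_{0})x_{\alpha}\rightarrow 0$, and applying a right $\phi$-character $\eta\in X^{*}$ to this convergence, together with the choice $b_{0}\in\ker\phi$ and the identity $\eta(x_{0}b_{\alpha})=\phi(b_{\alpha})\eta(x_{0})=\eta(x_{0})$, forces $\eta\equiv 0$, contradicting (ii). The main subtlety is producing left $\psi_{\phi}$-amenability in the absence of any central approximate identity for $T$; this turns out to be cleaner than in the biflat setting, because Johnson pseudo-contractibility furnishes an exactly (rather than approximately) commuting net, so Lemma \ref{net for app bi} is not needed.
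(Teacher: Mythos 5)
Your proposal is correct and follows essentially the same route as the paper, whose proof simply says to repeat the argument of Lemma \ref{app left} to get left $\psi_{\phi}$-amenability of $T$ from the Johnson pseudo-contractibility net and then to follow the proof of Theorem \ref{main}. Your only addition is to make explicit how the normalisation $\tilde{\psi}_{\phi}(R^{**}(m_{\alpha}))\rightarrow 1$ is obtained without assuming an identity for $T$ (by testing $\pi_{T}^{**}(m_{\alpha})t_{0}\rightarrow t_{0}$ against some $t_{0}$ with $\psi_{\phi}(t_{0})=1$), which is exactly the detail the paper leaves implicit.
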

\begin{proof}
	Let   $T$ be Johnson pseudo-contractible. Using a similar argument as in the proof of Lemma \ref{app left}, we can see that
	$T$ is left $\psi_{\phi}$-amenable. Following the proof of Theorem \ref{main} finishes the proof.
\end{proof}
\begin{cor}
Suppose that    $S$ is  the  left zero semigroup. 
	Then $$T=Tri(\ell^{1}(S),\ell^{1}(S),\ell^{1}(S))$$  is not Johnson pseudo-contractible.
\end{cor}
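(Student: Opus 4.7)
The plan is to apply the immediately preceding Theorem to the triangular algebra $T = Tri(A, B, X)$ with $A = B = X = \ell^{1}(S)$. By Remark 2.2, the augmentation $\phi_{S}$ lies in $\Delta(\ell^{1}(S))$, so in particular $\Delta(B) \neq \emptyset$, and the base hypothesis of that theorem is in place. It then remains to verify one of the auxiliary conditions (i) or (ii) for a suitable $\phi \in \Delta(B)$; I would take $\phi = \phi_{S}$.

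I first observe that condition (i) is not available for this $\phi$: because $fg = \phi_{S}(f)\,g$ in $\ell^{1}(S)$, any element $m \in \ell^{1}(S)$ with $\phi_{S}(m) = 1$ (for instance $m = \delta_{s}$ for some $s \in S$) trivially satisfies $fm = \phi_{S}(f)\,m$, so $\ell^{1}(S)$ is already left $\phi_{S}$-amenable. Accordingly, my route is to establish (ii), namely to exhibit a non-zero right $\phi_{S}$-character $\eta$ on the $(A, B)$-bimodule $X = \ell^{1}(S)$, whose bimodule structure is the one induced by the algebra multiplication of $\ell^{1}(S)$.

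The natural candidate is $\eta = \phi_{S} \in X^{*}$, and I would check directly that it works: for $x \in X$ and $b \in B$,
$$\eta(x \cdot b) \;=\; \phi_{S}(xb) \;=\; \phi_{S}\bigl(\phi_{S}(x)\,b\bigr) \;=\; \phi_{S}(x)\,\phi_{S}(b) \;=\; \phi_{S}(b)\,\eta(x),$$
and $\eta$ is plainly non-zero. Thus hypothesis (ii) of the preceding Theorem is satisfied, and the Theorem yields that $T$ is not Johnson pseudo-contractible. There is no genuine obstacle in this plan; the only subtle point is recognizing that condition (i) must be rejected in favour of (ii), and that the correct right character to use on the $X$-slot is simply $\phi_{S}$ itself.
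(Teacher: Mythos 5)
Your proposal is correct and follows essentially the same route as the paper: both invoke hypothesis (ii) of the preceding theorem by exhibiting the augmentation character $\phi_{S}$ itself as a right $\phi_{S}$-character on the module slot $X=\ell^{1}(S)$ (and since $\phi_{S}$ is multiplicative, the verification $\phi_{S}(xb)=\phi_{S}(x)\phi_{S}(b)$ holds under either convention for the left-zero product). Your extra observation that condition (i) is unavailable because $\ell^{1}(S)$ is already left $\phi_{S}$-amenable is accurate but not needed.
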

\begin{proof}
	It is known that every semigroup algebra $\ell^{1}(S)$ has a character (for instance, the augmentation character $\phi_{S}$). So  the Banach $(\ell^{1}(S),\ell^{1}(S))$-module $\ell^{1}(S)$ (with natural action)  has a right $\phi_{S}$-character. Thus by previous theorem $T$ is not Johnson psudo-contractible.
\end{proof}
\begin{small}

\end{small}
\end{document}